\newtheorem{theorem}{Theorem}
\newtheorem{lemma}[theorem]{Lemma}
\newcommand{\cP}{{\mathcal P}}
\newcommand{\cQ}{{\mathcal Q}}
\begin{document}
\title{Additive bases with coefficients of newforms}


\author{
{\textsc{Victor Cuauhtemoc Garc\'{i}a}}\\
\\ Departamento de Ciencias B\'asicas\\
Universidad Aut\'onoma Metropolitana - Azcapotzalco\\
C.P. 02200, CD.MX, M\'exico\\
email: {\tt vc.garci@gmail.com} \\
\medskip\\
{\textsc{Florin Nicolae}}\\
 \\Simion Stoilow Institute of Mathematics\\ of the
Romanian Academy\\ P.O.BOX 1-764\\ 
RO-014700 Bucharest\\
email: {\tt florin.nicolae@imar.ro}
}
\date{\today}
\pagenumbering{arabic}
\maketitle
%
%
\begin{abstract}
Let $f(z)=\sum_{n=1}^{\infty}a(n) e^{2\pi i nz}$ be a normalized Hecke eigenform in $S_{2k}^{\text{new}}(\Gamma_0(N))$  with integer Fourier coefficients. We prove that there exists a constant $C(f)>0$ such that  any integer is a sum of at most $C(f)$ coefficients $a(n) $. It holds $C(f)\ll_{\varepsilon,k}N^{\frac{6k-3}{16}+\varepsilon}$.

{\it Key words:} newform; Fourier coefficients; additive basis

MSC: 11F30 11P05

\end{abstract}

\section{Introduction}
The set $\{\tau(n)\mid n\geq 1\}$ of values of Ramanujan's function is an additive basis of the 
integers: 
any integer $Z$ can be written as
\[Z= \sum_{j=1}^{74000}\tau(n_j).\]
See \cite{GaraevGarciaKonyagin}, \cite{Snurnitsyn}. 
Here we prove a similar property for the Fourier coefficients of normalized Hecke eigenforms.
For integers $k\geq 1$, $N\geq 1$ denote by $S_{2k}^{\text{new}}(\Gamma_0(N))$ the space of newforms of weight $2k$ on $\Gamma_0(N)$.

\begin{theorem}\label{thm:main}
     Lef $f(z)=\sum_{n=1}^{\infty}a(n) e^{2\pi i nz}$ be a normalized Hecke eigenform 
     in $S_{2k}^{\text{new}}(\Gamma_0(N))$ with integer 
     Fourier coefficients. There exists a constant  $C(f)>0$ such that  any integer $Z$ is a sum  
  \begin{equation}\label{MainSum}
      Z=\sum_{j=1}^{\ell}a(n_j)
    \end{equation} 
     for some $\ell \le C(f)$ and integers $n_j \ll  |Z|^{\frac{2}{2k-1}} +1.$
		It holds 
		$$C(f)\ll_{\varepsilon,k}N^{\frac{6k-3}{16}+\varepsilon}. $$
\end{theorem}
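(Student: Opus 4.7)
My plan is to follow the strategy of Garaev--Garc\'ia--Konyagin~\cite{GaraevGarciaKonyagin} for Ramanujan's $\tau$, generalising it to arbitrary newforms and tracking the dependence on the level $N$. I will rely on three structural inputs: multiplicativity $a(mn)=a(m)a(n)$ for $\gcd(m,n)=1$; the Hecke recursion $a(p^{r+1})=a(p)a(p^r)-p^{2k-1}a(p^{r-1})$ at primes $p\nmid N$; and Deligne's bound $|a(n)|\le d(n)\, n^{k-1/2}$. Together these ensure that the set $A:=\{a(n):n\ge 1\}\subset \rZ$ is closed under multiplication of coprime entries and populates every interval $[-n^{k-1/2},n^{k-1/2}]$ with many integer values. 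The constraint $n_j\ll |Z|^{2/(2k-1)}$ is forced by matching Deligne's ceiling to $|Z|$: with $Y=|Z|^{2/(2k-1)}$ one has $Y^{k-1/2}\asymp |Z|$, so summands of the correct magnitude become available exactly in this range.

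\textbf{Main analytic step.} The heart of the argument will be to produce a prime $p_0\nmid N$ in a range polynomial in $N$ whose eigenvalue $a(p_0)$ is a small integer --- ideally $a(p_0)=\pm 1$, or at least paired with a second small prime $q_0$ so that $\gcd(a(p_0),a(q_0))=1$. Heuristically the Sato--Tate distribution produces such primes in abundance, but an effective version in terms of $N$ is required; this reduces essentially to subconvex bounds for twisted $L$-functions associated to $f$ and to moment estimates for $\{a(p)\}_p$. The exponent $(6k-3)/16=3(2k-1)/16$ in the final bound is expected to emerge from this step, the factor $3/16$ being the analytic exponent attainable from current subconvex technology, scaled by the Deligne weight $2k-1$ that controls the size of $a(p_0)$.

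\textbf{Constructing the representation, and the obstacle.} Given such a $p_0$, multiplicativity gives $a(p_0 m)=a(p_0)a(m)$ for $\gcd(m,p_0)=1$, so smallness of $a(p_0)$ transfers to a wide family of coefficients; combined with the Hecke recursion at $p_0$ (which produces integers with controlled residues modulo $p_0^{2k-1}$), this yields a rich collection of building blocks. A greedy/recursive decomposition of $Z$ then follows: subtract off the largest admissible $a(n_j)$, reduce to a smaller residual, and iterate, using a Cauchy--Davenport-type addition step to lift the multiplicative orbit of $a(p_0)$ to all residues. The number $\ell$ of summands is controlled by the depth of the recursion, proportional to the size of $p_0$, giving $\ell \le C(f)\ll_{\varepsilon,k} N^{(6k-3)/16+\varepsilon}$. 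The main obstacle I anticipate is the analytic step, i.e.\ exhibiting a polynomially small prime $p_0$ with prescribed small $|a(p_0)|$ and good arithmetic properties: improving the available subconvex input would immediately improve the exponent in $C(f)$, whereas the combinatorial construction of the representation is essentially algorithmic once such a $p_0$ is in hand, modulo careful bookkeeping of the $k$-dependence.
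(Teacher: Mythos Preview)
Your proposal has a genuine gap at the ``main analytic step.'' You ask for a prime $p_0\nmid N$, polynomial in $N$, with $a(p_0)$ a \emph{small} integer (ideally $\pm 1$). No such result is available: Sato--Tate gives the distribution of $a(p)/p^{k-1/2}$ in $[-2,2]$, but says nothing about hitting a specific integer value, and for a general newform it is not even known that $a(p)=\pm 1$ occurs for infinitely many $p$, let alone for one $p$ bounded polynomially in $N$. Subconvexity does not produce this either. Without this input your greedy/Cauchy--Davenport scheme never gets off the ground, and the rest of the sketch (``subtract the largest admissible $a(n_j)$ and iterate'') is too vague to assess.

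You have also misidentified the source of the exponent $\tfrac{3}{16}\cdot(2k-1)$. In the paper it does \emph{not} come from subconvex bounds, but from Matom\"aki's result (Lemma~\ref{lemma:firstnegative}) that the first $n_f$ with $a(n_f)<0$ and $(n_f,N)=1$ satisfies $n_f\ll (4k^2N)^{3/8}$; combined with Deligne this gives $|a(n_f)|\ll_\varepsilon n_f^{(2k-1)/2+\varepsilon}\ll_{\varepsilon,k} N^{(6k-3)/16+\varepsilon}$. The actual mechanism of the proof is entirely different from yours: one uses the Hecke relation $p^{2k-1}=a(p)^2-a(p^2)$ to express each $p^{2k-1}$ (for $p\nmid N$) as a bounded signed sum of coefficients $a(\cdot)$, and then invokes Hua's theorem on the Waring--Goldbach problem (Lemma~\ref{lemma:Hua}) to write any large $Z$ of the right parity as $\sum_{i=1}^{s_0} p_i^{2k-1}$. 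A short combinatorial argument (an ``admissible set'' device controlling collisions among the sums $\sum a(p_i)$) guarantees that enough primes $p$ have $a(p)$ itself expressible as a bounded combination of other $a(p_i)$'s, so that after multiplying through one lands on genuine coefficients at squarefree arguments. The single negative coefficient $a(n_f)$ is then used, via multiplicativity, to flip signs, and $C(f)\ll_k |a(n_f)|$ follows. None of these ingredients---the Hecke identity reducing to Waring--Goldbach, Hua's asymptotic, or Matom\"aki's first-sign-change bound---appears in your plan.
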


Our method follows the idea of ~\cite{GaraevGarciaKonyagin} to connect the
solubility of~\eqref{MainSum} with the Waring--Goldbach problem. 
We use results of Ram Murty on oscillations of Fourier coefficients of newforms,  of Matom\"aki on signs of the coefficients, 
and of Hua on the Waring-Goldbach problem, which are stated in the second section. The third section contains the proof of the theorem.

\section{Lemmas}

We apply the following facts.
\begin{lemma}[Ram Murty]\label{lemma:MurtyDensity}
   Lef $f(z)=\sum_{n=1}^{\infty}a(n) e^{2\pi i nz}$ be a normalized Hecke eigenform in 
   $S_{2k}^{\text{new}}(\Gamma_0(N))$. For any $\varepsilon>0$ we have 
  \[
    |a(p)|>(\sqrt{2}-\varepsilon)p^{\frac{2k-1}{2}}
  \] 
for a positive density of primes $p$.	
\end{lemma}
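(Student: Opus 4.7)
The plan is to recast the statement by setting $b(p) := a(p)/p^{(2k-1)/2}$, so that Deligne's bound gives $|b(p)| \leq 2$, and the goal becomes showing that the set
$$S_\varepsilon(x) \;=\; \{p \leq x : |b(p)| > \sqrt{2} - \varepsilon\}$$
satisfies $|S_\varepsilon(x)| \gg \pi(x)$.

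The two analytic inputs to invoke are the Rankin--Selberg second-moment asymptotic
$$\sum_{p \leq x} b(p)^2 \;\sim\; \pi(x),$$
which follows from the factorization $L(s,f\otimes f) = \zeta(s-2k+1)\,L(s,\mathrm{Sym}^2 f)$ together with a standard Tauberian argument, and the fourth-moment asymptotic
$$\sum_{p \leq x} b(p)^4 \;\sim\; 2\pi(x),$$
obtained from the analytic properties of $L(s,\mathrm{Sym}^2 f \otimes \mathrm{Sym}^2 f)$, or equivalently from the analytic continuation and non-vanishing on $\mathrm{Re}(s)=1$ of the symmetric power $L$-functions $L(s,\mathrm{Sym}^j f)$ for $j \leq 4$, now available unconditionally for holomorphic newforms.

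Granting these, the lemma drops out from a short estimate. On the complement of $S_\varepsilon(x)$ inside the primes $\leq x$ one has $b(p)^2 \leq (\sqrt{2}-\varepsilon)^2$, so $b(p)^4 \leq (\sqrt{2}-\varepsilon)^2 b(p)^2$; on $S_\varepsilon(x)$ I use only the trivial $b(p)^4 \leq 16$. Summing and inserting the two moments,
$$2\pi(x)(1+o(1)) \;\leq\; (\sqrt{2}-\varepsilon)^2\, \pi(x)(1+o(1)) \;+\; 16\,|S_\varepsilon(x)|,$$
which rearranges to $|S_\varepsilon(x)| \geq \tfrac{1}{16}(2\sqrt{2}\,\varepsilon - \varepsilon^2)\pi(x)(1+o(1))$, a positive density for $\varepsilon \in (0,2\sqrt{2})$; for $\varepsilon \geq \sqrt{2}$ the conclusion is trivial since $|a(p)|\geq 0$. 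The threshold $\sqrt{2}$ is optimal for this type of moment argument, because assuming $|b(p)| \leq M$ for almost all $p$ would give $b(p)^4 \leq M^2 b(p)^2$, and comparing moments forces $M \geq \sqrt{2}$.

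The step I expect to be hardest is the fourth-moment asymptotic: a one-sided bound such as $\sum_{p\leq x} b(p)^4 \ll \pi(x)$ would not suffice, and producing the sharp constant $2$ requires the full analytic content of the symmetric power $L$-functions up to degree four. One should also keep in mind the CM case, where the fourth moment equals $3$ rather than $2$, so the same argument yields an even stronger bound and goes through without modification.
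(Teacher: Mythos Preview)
Your argument is correct and is exactly Ram Murty's moment method: the paper does not give a self-contained proof but simply cites \cite{Murty} and \cite{MurtyMurty}, and what you have written is a faithful reconstruction of that proof, including the use of the Rankin--Selberg second moment $\sum_{p\le x} b(p)^2\sim\pi(x)$ and the fourth moment $\sum_{p\le x} b(p)^4\sim 2\pi(x)$ obtained via $L(s,\mathrm{Sym}^2 f\otimes\mathrm{Sym}^2 f)$. Your remark on the CM case (fourth moment $3$) and on the optimality of the constant $\sqrt{2}$ is also in line with Murty's discussion.
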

\begin{proof} This was proved in \cite[Corollary 2]{Murty} for forms on the full modular group, but the statement is true 
also for forms on $\Gamma_0(N)$  \cite[Chapter 4, Theorem 8.6 (ii) with $m=1$, page 89]{MurtyMurty}.
\end{proof}
\begin{lemma}\label{lemma:firstnegative}
   Lef $f(z)=\sum_{n=1}^{\infty}a(n) e^{2\pi i nz}$ be a normalized Hecke eigenform in 
   $S_{2k}^{\text{new}}(\Gamma_0(N))$. Let $n_f$ be the smallest integer such that $a(n_f)<0$ and $(n_f,N)=1$. Then 
	$$ n_f\ll (4k^2N)^{\frac38},$$
	where the implied constant is absolute.
\end{lemma}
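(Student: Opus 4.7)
The plan is to proceed by contradiction: assume $a(n) \geq 0$ for every $n \leq X$ with $(n,N)=1$, and show that this forces $X \ll (4k^2N)^{3/8}$. The two ingredients are a lower bound for the partial sum $S(X) := \sum_{n \leq X,\ (n,N)=1} a(n)$ coming from positivity together with Lemma~\ref{lemma:MurtyDensity}, and a competing upper bound coming from the analytic behaviour of the Hecke $L$-function $L(s,f)$.

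For the lower bound, Lemma~\ref{lemma:MurtyDensity} produces a set $\cP$ of primes of positive density with $|a(p)| \geq (\sqrt{2}-\varepsilon)p^{(2k-1)/2}$. Since we are assuming every $a(n) \geq 0$ in our range, the absolute value is redundant; partial summation over the primes $p \in \cP$ with $p \leq X$ and $p \nmid N$ therefore gives a bound of the shape $S(X) \gg X^{(2k+1)/2}/\log X$, where the implied constant is absolute.

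For the upper bound I would express $S(X)$ as a Perron-type integral of $L(s,f)$ (modified by the Euler factors at primes dividing $N$), shift the contour across the critical strip to $\mathrm{Re}(s) = k$, and invoke the functional equation together with a bound of the shape $L(k+it,f) \ll ((k+|t|)^2N)^{\alpha+\varepsilon}$. Combined with Stirling estimates for the gamma factor of weight $2k$, this would yield $S(X) \ll X^{k} (4k^2N)^{\alpha+\varepsilon}$ for some exponent $\alpha$ depending on how much sub-convexity one puts in: plain Phragm\'en--Lindel\"of gives $\alpha = 1/4$, while a finer treatment (smoothing with an optimised bump function, or a dyadic decomposition exploiting Rankin--Selberg mean-square inputs in the spirit of Matom\"aki) gives the smaller $\alpha$ needed to produce the claimed exponent $3/8$.

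Matching the two estimates yields $X^{1/2-\varepsilon} \ll (4k^2N)^{\alpha+\varepsilon}$ and hence $X \ll (4k^2N)^{2\alpha}$. The main technical obstacle is precisely to push $2\alpha$ down to $3/8$: the naive convexity bound only delivers $2\alpha = 1/2$, and one must exploit either an averaged sub-convexity estimate fed by the Rankin--Selberg bound on $\sum_{n\leq X,\,(n,N)=1}a(n)^2$ or a smoothing that balances the first-moment estimate against the second-moment estimate. The careful bookkeeping of the $k$- and $N$-dependence in both steps -- in the lower bound through the density constant of $\cP$ and the loss from the primes dividing $N$, and in the upper bound through the gamma factor and the local factors at bad primes -- is what ultimately pins down the exponent $3/8$ rather than $1/2$.
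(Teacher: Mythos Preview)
The paper does not prove this lemma at all: it simply cites Matom\"aki's theorem. So your sketch is not competing against an argument in the paper but against the actual content of \cite{Mato}. Measured that way, your outline has two genuine gaps.

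First, your lower bound is not uniform. You invoke Lemma~\ref{lemma:MurtyDensity} to produce many primes $p\le X$ with $a(p)\ge (\sqrt{2}-\varepsilon)p^{(2k-1)/2}$, and then claim $S(X)\gg X^{(2k+1)/2}/\log X$ with an absolute implied constant. But the positive density in Murty's result depends on $f$, and even the threshold beyond which that density is realised depends on $f$; so both the implied constant and the range of validity of your lower bound depend on $f$. Since the statement asserts an \emph{absolute} implied constant in $n_f\ll(4k^2N)^{3/8}$, this already breaks the argument. Matom\"aki's lower bound does not go through an equidistribution statement of Murty type; it comes from comparing a first moment against a Rankin--Selberg second moment with constants tracked explicitly in $k$ and $N$, which is precisely what allows the final bound to be uniform.

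Second, you explicitly acknowledge the upper-bound gap: straight convexity yields $2\alpha=1/2$, and you only gesture at ``smoothing'' or ``averaged sub-convexity'' to reach $3/8$. This is the whole point of the result: the improvement from the Iwaniec--Kohnen--Sengupta / Kowalski--Lau--Soundararajan--Wu exponents down to $3/8$ is exactly Matom\"aki's contribution, obtained by a specific smoothing and a careful balancing of the first-moment (analytic) bound against the Rankin--Selberg input. Without spelling out that mechanism your sketch is just the convexity argument, which delivers $n_f\ll(4k^2N)^{1/2+\varepsilon}$ at best, not $3/8$.
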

\begin{proof} 
   See \cite[Theorem 1]{Mato}. See also \cite{KS}, \cite{IKS}, \cite{KLSW}.
\end{proof}
\medskip

   Let $k\ge 1$ be an integer, $p$ a prime number, $\theta\geq 0$ the integer with $p^{\theta}\mid k$ and 
	$p^{\theta+1}\nmid k, $
  \begin{equation}\label{eq:gammadef}
    \gamma = \begin{cases}
           \theta + 2, & \textrm{if } p=2\, \textrm{and } \theta>0, \\
           \theta + 1, & \textrm{otherwise},
             \end{cases}
  \end{equation}
   
  \begin{equation}\label{eq:Kdef}
	K=\prod_{(p-1)\lvert k}p^{\gamma}.
  \end{equation}
\begin{lemma}[Hua]\label{lemma:Hua}
   Let $k\ge 1$ be an integer and $K$ as in~\eqref{eq:Kdef}.
   If
  \[
     s\ge \begin{cases}
             2^k, & \textrm{if }  k \le 10 \\
             2k^2(2\log k + \log\log k + \frac{5}{2}),& \textrm{if }  k > 10,
          \end{cases}
  \]  
   then for any $ Z\equiv s \pmod K$ the number $ I_s(Z)$ of solutions of the equation
  \begin{equation}\label{eq:Waring-Goldbach}
    p_1^k +\cdots + p_s^k=Z,  \quad \textrm{for primes } p_1,\ldots, p_s,
  \end{equation}
   satisfies the following asymptotic formula
  \begin{equation}\label{eq:asymp}
      I_s(Z)=\mathfrak{G}(Z)\frac{\Gamma^s({1}/{k})}{\Gamma(s/k)}\frac{Z^{\frac{s}{k}-1}}{\log^s Z}+
             O_{k,s}\left(\frac{Z^{\frac{s}{k}-1}}{\log^{s+1} Z}\log\log Z\right),
  \end{equation} 
    where $\mathfrak{G}(Z)$ is the singular series
  \[
    \mathfrak{G}(Z)=\sum_{q=1}^{\infty}\left(\sum_{(h,q)=1}
    \left(\sum_{(l,q)=1}e^{2 \pi i h \frac{l^k}{q}}\right)^s e^{2\pi i \frac{-h}{q}Z}\right), 
  \] 
   which is absolutely convergent and 
    there exist positive constants $A,B$ independent of $Z$ such that
  \[
    0<A \le \mathfrak{G}(Z) < B.
  \]
\end{lemma}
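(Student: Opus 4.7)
The plan is to execute the Hardy--Littlewood circle method for the Waring--Goldbach problem. Set $P=Z^{1/k}$ and introduce the log-weighted exponential sum
\[
 S(\alpha)=\sum_{p\le P}(\log p)\,e^{2\pi i\alpha p^k},
\]
so that by orthogonality $\sum_{p_1^k+\cdots+p_s^k=Z}\log p_1\cdots\log p_s=\int_0^1 S(\alpha)^s e^{-2\pi i\alpha Z}\ud\alpha$. A standard partial summation converts this weighted count into $I_s(Z)$ and produces the factor $\log^s Z$ in the denominator of~\eqref{eq:asymp}. Perform a Farey-type dissection of $[0,1]$ into major arcs $\mathfrak{M}$ (intervals of radius $P^{-k}(\log P)^{B}$ around fractions $a/q$ with $q\le(\log P)^B$, $(a,q)=1$) and their complement $\mathfrak{m}$.

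On the major arcs I would apply the Siegel--Walfisz theorem in each residue class $l\pmod q$ to replace $S$ by its principal part
\[
 S(a/q+\beta)=\frac{C(a,q)}{\varphi(q)}\,v(\beta)+O\bigl(P\exp(-c\sqrt{\log P})\bigr),
\]
with $C(a,q)=\sum_{(l,q)=1}e^{2\pi i a l^k/q}$ and $v(\beta)=\sum_{n\le P}e^{2\pi i\beta n^k}$. Raising to the $s$-th power, integrating over $\beta$, and summing over admissible $a/q$ decouples the contribution into the truncated singular series times the singular integral $\int_{-\infty}^{\infty}v(\beta)^s e^{-2\pi i\beta Z}\ud\beta$; the latter evaluates to $\Gamma^s(1/k)\Gamma(s/k)^{-1}Z^{s/k-1}$ with admissible error by the standard saddle-point computation, and completing the singular series to $q=\infty$ introduces only a tail absorbed into the error term of~\eqref{eq:asymp}.

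The main technical step is the minor-arc bound, for which I would combine Hua's $L^{2^j}$-mean-value inequalities
\[
 \int_0^1|v(\alpha)|^{2^j}\ud\alpha\ll P^{2^j-j+\varepsilon}\qquad(1\le j\le k)
\]
with Vinogradov's pointwise estimate $\sup_{\alpha\in\mathfrak{m}}|S(\alpha)|\ll P^{1-\sigma(k)+\varepsilon}$, the latter obtained from the Vaughan/type-I--type-II decomposition of the von Mangoldt weight. The thresholds $s\ge 2^k$ (small $k$) and $s\ge 2k^2(2\log k+\log\log k+\tfrac52)$ (large $k$) are exactly those under which the product of pointwise and mean-value bounds dominates the minor-arc contribution by the required factor $\log\log Z/\log Z$. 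This is the place where the specific numerical shape of the hypothesis enters, and it is the most delicate part of the argument.

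For the positivity of $\mathfrak{G}(Z)$, factor it as an Euler product $\prod_p\chi_p(Z)$, where $\chi_p(Z)$ is a local density counting solutions of $x_1^k+\cdots+x_s^k\equiv Z\pmod{p^m}$ with $(x_i,p)=1$. For primes $p$ with $(p-1)\nmid k$ the $k$-th powers modulo $p$ already surject, giving $\chi_p(Z)\ge 1+O(p^{-1/2})$ by Deligne-type character-sum bounds, and the product over such $p$ converges absolutely. The remaining primes are precisely those captured by~\eqref{eq:gammadef}--\eqref{eq:Kdef}: for these, the $k$-th powers do not generate all residues modulo $p^\gamma$, and a standard $p$-adic lifting (using that units raised to the $k$-th power avoid certain cosets only mod $p^\gamma$) shows that $\chi_p(Z)>0$ iff $Z\equiv s\pmod{p^\gamma}$; hence $Z\equiv s\pmod K$ is exactly the compatibility condition that forces $\chi_p(Z)$ to be bounded below uniformly, giving the two-sided bound $0<A\le\mathfrak{G}(Z)<B$.
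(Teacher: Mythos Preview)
The paper does not give a proof of this lemma at all: it simply cites Hua's monograph, specifically \cite[Theorem 11 and Theorem 12, pages 78 and 100]{Hua}. Your proposal, by contrast, sketches the circle-method argument that underlies those theorems. So the two are not really comparable as proofs; you have reproduced (in outline) the content that the paper is content to quote.

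Your outline is broadly faithful to how Hua's proof actually goes: log-weighted generating function, Farey dissection, Siegel--Walfisz on the major arcs to separate the local sums $C(a,q)$ from the singular integral, Hua's $L^{2^j}$ inequality combined with a Vinogradov-type pointwise bound on the minor arcs, and an Euler-product analysis of $\mathfrak{G}(Z)$. Two minor points of overreach are worth flagging. First, invoking ``Deligne-type character-sum bounds'' for the local densities $\chi_p(Z)$ is unnecessary; elementary Gauss-sum estimates already give $\chi_p(Z)=1+O(p^{-2})$ for $(p-1)\nmid k$ once $s$ is in the stated range, and this is what Hua uses. Second, the assertion that $\chi_p(Z)>0$ \emph{iff} $Z\equiv s\pmod{p^\gamma}$ is a slight oversimplification of the local solvability analysis (the actual condition is that $Z\equiv s\pmod{p^\gamma}$ is sufficient, and the definition of $\gamma$ in~\eqref{eq:gammadef} is tailored so that Hensel lifting applies beyond that level); but the conclusion you draw, namely the uniform two-sided bound on $\mathfrak{G}(Z)$ under $Z\equiv s\pmod K$, is correct. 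None of this affects the validity of the sketch as a roadmap, and for the purposes of the present paper a citation is all that is required.
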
  
\begin{proof}
  See~\cite[Theorem 11 and Theorem 12, pages 78 and 100 respectively]{Hua}.
\end{proof} 
   Kumchev and Wooley proved in \cite[Theorem 1]{KW} that equation~\eqref{eq:Waring-Goldbach} 
   has solution if $k$ is large and $s\ge (4k-2)\log k +k-7$.
\section{Proof of the theorem}
   We denote by $\cP$ the set of prime numbers which do not divide $N$ and $\cP(X)=\cP\cap [1,X]$. 
   By Lemma~\ref{lemma:firstnegative} let 
     $n_f>1$ be the smallest integer such that $a(n_f)<0$ and $(n_f,N)=1$.
   Let $M$ be a large parameter 
    and set
  \[
    \cP_0(M) =\cP \cap (n_f,M].
  \]
    We will say that $D_M\subset \cP_0(M)$ is an admissible subset of $\cP_0(M)$ if 
  \[
     \sum_{i=1}^{k}a(p_i)\neq \sum_{i=k+1}^{2k}a(p_i) 
	\] 
	  for any $ p_1,\ldots, p_{2k} \in D_M $ such that
  \[
     p_1 < \ldots < p_k, \quad p_{k+1} < \ldots < p_{2k}, \quad 
     (p_1, \ldots, p_k) \neq (p_{k+1}, \ldots, p_{2k}).
  \] 
    We prove that admissible sets exist. 
    The Ramanujan--Petersson conjecture, proved by Deligne \cite{Deligne}, states 
    that $|a(p)|\le 2 p^{\frac{2k-1}{2}}$ for
    any prime number $p$. Let $\cP'\subset \cP$ be the set of prime numbers such that
  \begin{equation}\label{eq:coeffs}
     p^{\frac{2k-1}{2}}<|a(p)|.
  \end{equation}
    From Lemma~\ref{lemma:MurtyDensity} it follows that
    there exists a constant $0< \alpha \le 1,$ which depends on $f,$  
    such that for $T$ large enough we have
  \begin{equation}
    \cP' \cap [1,T]= \alpha \pi(T)\left(1+o(1)\right), \label{eq:largecoeffs}
  \end{equation}
  where $\pi(T) $ is the prime counting function.
    Let $\ell_0 > 10 \log k$ be an integer. For any $1 \le i \le 2k$ let 
  \begin{equation} \label{def:A_i}
    A_{i}:=\cP' \cap \left[{2^{\ell_0i}},{2^{\ell_0i+1}}\right].
  \end{equation}  
    From ~\eqref{eq:largecoeffs} it follows that 
  \[
    |A_i |> \frac{\alpha}{2} \frac{2^{\ell_0i}}{ \log 2^{\ell_0i}},
  \] 
    so we can choose M and $\ell_0$ sufficiently large such that $A_i \subset \cP_0(M)$ and
   $ |A_i|>1$ {for any}  $1\le i \le 2k.$ Let $p_i \in A_i, \, 1\le i \le 2k $. 
   From ~\eqref{eq:coeffs} and the Ramanujan-Petersson-Deligne estimate $|a(p_i)|\le 2 p_i^{\frac{2k-1}{2}} $ it follows that   
  \begin{equation}\label{ineq:p_sequence}
    |a(p_1)| < \cdots < |a(p_k)|<|a(p_{k+1})|  < \cdots <|a(p_{2k})|.
  \end{equation}  
    The set  $\cQ:=\{p_1,\ldots,p_{2k}\}$ is an admissible subset of $\cP_0(M)$. Indeed, let $q_1, \ldots, q_{2k}\in \cQ$ be such that 
  \begin{equation}\label{eq:Q_assumptions}
     q_1 < \ldots < q_k, \quad q_{k+1} < \ldots < q_{2k}, \quad 
     (q_1, \ldots, q_k) \neq (q_{k+1}, \ldots, q_{2k}), 
  \end{equation}
    and 
  \begin{equation}\label{eq:absurdum}
    \sum_{i=1}^k a(q_i) =\sum_{i=k+1}^{2k} a(q_i). 
  \end{equation} 
    Let $t$ be the largest index $1\le t \le k$ such that
    $ q_t \neq q_{k+t}$. From ~\eqref{eq:absurdum} it follows that 
  \[
    \sum_{i=1}^t a(q_i) =\sum_{i=k+1}^{k+t} a(q_i),
  \]  
  \[
    a(q_{k+t})= \sum_{i=1}^t a(q_i) -\sum_{i=k+1}^{k+t-1} a(q_i). 
  \]
    From ~\eqref{ineq:p_sequence} and ~\eqref{eq:Q_assumptions} it follows that
  \[
    |a(q_1)| < \cdots <|a(q_t)|, \quad |a(q_{k+1})| < \cdots <|a(q_{k+t})|,
  \]
   hence 
\begin{align}\label{ineq:Coeffs}
    |a(q_{k+t})|&=\left|\sum_{i=1}^t a(q_i) -\sum_{i=k+1}^{k+t-1} a(q_i)\right| \le 
    \sum_{i=1}^t |a(q_i)| +\sum_{i=k+1}^{k+t-1} |a(q_i)| \nonumber \\
               & \le  k (|a(q_t)| + |a(q_{k+t-1})|).
  \end{align}
   Without loss of generality we can suppose that $q_t < q_{k+t}$. Let $1\le s\le 2k$ be such that $q_{k+t}=p_s$. 
   It holds that $ q_{t},q_{k+t-1}\le p_{s-1},$ and from ~\eqref{eq:coeffs}, \eqref{def:A_i} and the 
   Ramanujan-Petersson-Deligne  estimate
   $|a(p_{s-1})|\le 2p_{s-1}^{(2k-1)/2} $ it follows that
  \[
    k (|a(q_)| + |a(q_{k+t-1})|) \le 2k |a(p_{s-1})| \le 4k 2^{(\ell_0(s-1)+1)\frac{2k-1}{2}},
  \]
  \[
    |a(q_{k+t})|=|a(p_s)|\ge 2^{\ell_0 s \frac{2k-1}{2}}.
  \]
    From the above  estimates and~\eqref{ineq:Coeffs} it follows that
  \[
    2^{\ell_0 s (2k-1)/2} \le  |a(q_{k+t})|\le k (|a(q_t)| + |a(q_{k+t-1})|)  \le 4k 2^{(\ell_0(s-1)+1)\frac{2k-1}{2}},
  \]  
  hence
  $$
     \ell_0 \le  1+ \frac{\log 4k}{(2k-1)\log 2},
  $$  
   which contradicts the assumption $\ell_0 > 10 \log k$. So   $\cQ$ is an admissible 
subset of $\cP_0(M)$.  
\medskip 

    Let $\cP_0'(M) \subset \cP_0(M)$ be some addmisible subset with largest cardinality. 
    We prove that
  \begin{equation}\label{ineq:P'estimate} 
     2k \le |\cP_0'(M)| \ll_k M^{\frac{2k-1}{2k}}.
  \end{equation}
    Since the admissible subset $\cQ$ constructed above has $2k$ elements it follows that 
  \[
     2k \le |\cP_0'(M)|. 
  \]       
   Let   
  \[
     S_k:= \{a(p_1)+\cdots +a(p_k)\;:\; p_1< \ldots < p_k, \, p_i \in \cP'_0(M)  \}.
  \] 
    Given  $\lambda \in S_k,$ let $T(\lambda)$ be the number of solutions $(p_1,\ldots,p_k) $ of the equation
  \[
      a(p_1)+\cdots +a(p_k) = \lambda, \quad p_1< \ldots < p_k, \quad p_i \in \cP'_0(M).
  \]  
     It holds that
  \[
     \sum_{\lambda \in S_k}T(\lambda)  \gg_k |\cP'_0(M)|^k.
  \]
     The Cauchy--Schwarz inequality implies that
  \begin{equation}\label{eq:SizeP0}
     |\cP'_0(M)|^{2k}\ll_k \left( \sum_{\lambda \in S_k}T(\lambda) \right)^2 \le 
     |S_k| \sum_{\lambda \in S_k}T^2(\lambda).
  \end{equation}
    Note that $\sum_{\lambda \in S_k}T^2(\lambda)$ is the number of solutions of the equation
  \begin{equation}\label{eq:symT}
    a(p_1)+\cdots +a(p_k) = a(p_{k+1})+\cdots +a(p_{2k}), 
  \end{equation}
    with
  \[
      p_1< \ldots < p_k, \quad
      p_{k+1}< \ldots < p_{2k}, \quad p_i \in \cP'_0(M).
  \] 
    Since $\cP_0'(M)$ is admissible ~\eqref{eq:symT} holds only if 
    $(p_1,\ldots,p_k)=(p_{k+1},\ldots,p_{2k})$. From this and ~\eqref{eq:SizeP0} 
    it follows that 
   \begin{equation}\label{ineq:LowBoundSk}
     |\cP'_0(M)|^{k}\ll_k |S_k|.
   \end{equation}
     The  estimate $|a(p)|\le 2 p^{\frac{2k-1}{2}}$ implies  
   \[
      |S_k|\ll_k M^{\frac{2k-1}{2}},
   \]  
     so from ~\eqref{ineq:LowBoundSk} we have
   \[
     |\cP_0'(M)|\ll_k M^{\frac{2k-1}{2k}}
   \]  
    and \eqref{ineq:P'estimate} is proved.
\medskip \\
    Let $p\in \cP_0(M)\setminus \cP_0'(M)$.  We proceed as in~\cite[Page 39]{GaraevGarciaKonyagin} to prove that there exist 
		$p_1, \ldots, p_{2k-1}$
    in $\cP'_0(M)$ such that
   \[
     a(p)=\sum_{i=1}^{k}a(p_i) - \sum_{i=k+1}^{2k-1}a(p_i).
   \] 
    Indeed, 
    the maximality of $\cP'_0(M)$ implies that there exist $$q_1, \ldots, q_{2k}\in \cP'_0(M)\cup \{p\}$$
    such that
 \[
     \sum_{i=1}^{k}a(q_i)= \sum_{i=k+1}^{2k}a(q_i), 
   \] 
   \begin{equation}\label{q-assumptions}
      q_1 < \ldots < q_k, \quad q_{k+1} < \ldots < q_{2k}, \quad 
     (q_1, \ldots, q_k) \neq (q_{k+1}, \ldots, q_{2k}).
   \end{equation}
    Moreover, 
   \[
     p \in \{q_1, \ldots, q_{2k}\},
   \] 
    and, by ~\eqref{q-assumptions}, $p$ occurs at most twice in
    the sequece $q_1, \ldots, q_{2k}$. If $p$ occurs twice, then it appears in
    $q_1, \ldots, q_{k}$ and in $q_{k+1}, \ldots, q_{2k}$, thus
   \[
     \sum_{i=1}^{k-1}a(q'_i)= \sum_{i=k+1}^{2k-1}a(q'_i), 
   \]      
    for some $q'_1,\ldots, q'_{2k}$ in $\cP'_0(M)$ with
   \[
      q'_1 < \ldots < q'_k, \quad q'_{k+1} < \ldots < q'_{2k},\quad (q'_1, \ldots, q'_k) \neq (q'_{k+1}, \ldots, q'_{2k}).
   \] 
    This is impossible, since $\cP_0'(M)$ is admissible with at least $2k$ elements.\\
		Therefore, for any 
    $p\in \cP_0(M)\backslash \cP'_0(M)$ there exist $p_1, \ldots, p_{2k-1}$
    in $\cP'_0(M)$\\\ such that
   \[
     a(p)=\sum_{i=1}^{k}a(p_i) - \sum_{i=k+1}^{2k-1}a(p_i).
   \] 

    Multiplying by $a(p)$ and taking into account that $(p,p_i)=1$ we get
   \[
      a(p)^2=\sum_{i=1}^{k}a(pp_i) - \sum_{i=k+1}^{2k-1}a(pp_i),
   \]
	since the coefficients of $f$ are multiplicative.
    Subtracting $a(p^2)$ and applying the identity
    $p^{2k-1}=a(p)^2- a(p^2)$ which is satisfied by the coefficients of $f$ it follows that
   \begin{equation}\label{eq:PrimePower}
     p^{2k-1}=\sum_{i=1}^{k}a(pp_i) - \sum_{i=k+1}^{2k-1}a(pp_i) - a(p^2).
 \end{equation}
Let 
  \[
    s_0\ge \begin{cases}
          2^{2k-1}, & \textrm{if }  2k-1 \le 10 \\
          2(2k-1)^2(2\log (2k-1) + \log\log (2k-1) + \frac{5}{2}),& \textrm{if }  2k-1 > 10.
          \end{cases}
  \] 
    We prove that for $Z$ large there exist 
    $p_1,\ldots,p_{s_0} \in \cP_0(Z^{1/(2k-1)})\setminus \cP'_0(Z^{1/(2k-1)})$ such that 
  \[
    Z=p_1^{2k-1}+\cdots + p_{s_0}^{2k-1}. 
  \] 
    Let $Z_k:=Z^{\frac{1}{2k-1}}$ and
  \[
      K=\prod_{p-1 \lvert 2k-1} p^{\gamma},
  \] 
    with $\gamma$ defined as in~\eqref{eq:gammadef}.  Since $2k-1$ is odd, the only prime number $p$ with 
		$p-1 \lvert 2k-1 $ is $p=2$, and for $p=2$ we have $$\theta =0 , \quad \gamma=1,$$ hence $$K=2.$$
    By Lemma~\ref{lemma:Hua} there exists a positive constant $c_1=c_1(k)$ such that 
    for any $ Z\equiv s_0 \pmod 2,$ with $Z$ large the number of solutions $I_{s_0}(Z)$ 
    of 
  \begin{equation}\label{eq:WaringGoldbach}
    p_1^{2k-1}+\cdots + p_{s_0}^{2k-1}= Z 
  \end{equation}
    with $p_1,\ldots, p_{s_0} \in \cP_0(Z_k),$ satisfies
  \begin{equation}\label{ineq:I}
    I_{s_0}(Z)\ge c_1 \frac{Z^{\frac{s_0}{2k-1}-1}}{\log^{s_0}Z}.
  \end{equation}
    Now consider equation~\eqref{eq:WaringGoldbach}  with at least
    one $p_i \in \cP'_0(Z_k)$ and denote by $I'_{s_0}(Z)$ its number of solutions.
    $I'_{s_0}(Z)$ should be less than $s_0 I'_{s_0-1},$ where $I'_{s_0-1}$ denotes the number 
    of solutions of the equation
  \begin{equation}\label{eq:ShortWaringGoldbach}
    p_1^{2k-1}+\cdots + p_{s_0-1}^{2k-1}+ p_{s_0}^{2k-1}= Z,  
  \end{equation}
    with $p_1,\ldots, p_{s_0-1} \in \cP_0(Z_k),$ and $p_{s_0}\in \cP'_0(Z_k).$
    Note that
  \[
    I'_{s_0-1}=\sum_{p_{s_0} \in \cP'_0(Z_k)} I'_{s_0-1}(Z-p^{2k-1}_{s_0}),
  \] 
    where $I'_{s_0-1}(Z-p^{2k-1}_{s_0})$ denotes the number of solutions of~\eqref{eq:ShortWaringGoldbach}
    for $p_{s_0}$ given. Therefore we have
  \[
    I'_{s_0-1}  \le \max_{p_{s_0}\in \cP'_0(Z_k)} \left\{
           I'_{s_0-1}(Z-p^{2k-1}_{s_0}) \right\}
           \sum_{{p_{s_0} \in \cP'_0(Z_k)}} 1.
  \]   
     Afterwards, for some $p'_{s_0} \in \cP'_0(Z_k)$ we get
  \begin{equation}\label{eq:I'}
     I'_{s_0-1}  \le I'_{s_0-1}(Z-p'^{2k-1}_{s_0}) \lvert \cP'_0(Z_k)\lvert.
  \end{equation}
    In order to estimate $I'_{s_0-1}(Z-p'^{2k-1}_{s_0})$ we apply  Lemma~\eqref{lemma:Hua} 
    with $s_0-1$ variables. Recalling that $Z-p'^{2k-1}_{s_0} > n_f^{2k-1},$ we obtain
  \begin{equation}\label{eq:I'p'}
    I'_{s_0-1}(Z-p'^{2k-1}_{s_0})\ll 
      \frac{(Z-p'^{2k-1}_{s_0})^{\frac{s_0-1}{2k-1}-1}}{\log^{s_0-1}(Z-p'^{2k-1}_{s_0})}
     \ll \frac{Z^{\frac{s_0-1}{2k-1}-1}}{\log^{s_0-1}Z}.
  \end{equation}
    Combining equations~\eqref{eq:I'}, \eqref{eq:I'p'} and  estimate~\eqref{ineq:P'estimate} 
    we get
  \begin{equation}\label{ineq:I'}
    I'_{s_0-1} \ll \lvert \cP'_0(Z_k)\lvert \frac{Z^{\frac{s_0-1}{2k-1}-1}}{\log^{s_0-1}Z}
    \ll_k  \frac{Z^{\frac{s_0}{2k-1}-1-\frac{1}{2k(2k-1)}}}{\log^{s_0-1}Z}.
  \end{equation}
    The number of solutions for \eqref{eq:WaringGoldbach} with 
    $p_i \in \cP_0(Z^{1/(2k-1)})\backslash \cP'_0(Z^{1/(2k-1)})$ is equal to $I_{s_0}(Z)-I'_{s_0}(Z).$
    The  estimates~\eqref{ineq:I} and \eqref{ineq:I'} imply that
   \[
      I_{s_0}(Z)-I'_{s_0}(Z)\ge I_{s_0}(Z)-s_0I'_{s_0-1} \gg_k 
      \frac{Z^{\frac{s_0}{2k-1}-1}}{\log^{s_0}Z} \left(1  - 
      \frac{\log Z}{Z^{1/(2k(2k-1))}} \right).
   \]
     Therefore  equation~\eqref{eq:WaringGoldbach} is solvable for primes in 
     $\cP_0(Z^{1/(2k-1)})\backslash \cP'_0(Z^{1/(2k-1)}).$
     From this and (15) it follows that any large integer $Z$ with
     $ Z\equiv s_0 \pmod 2$ has a representation
   \[
      Z= \sum_{i=1}^{ks_0} a(n_i) - \sum_{j=1}^{ks_0} a(n_j )
   \]  
     for some integers $n_i,n_j \le Z^{2/(2k-1)}$ with $(n_f!N,n_i)=(n_f!N,n_j)=1.$ Note that
     $-Z$ has a similar representation. We also note that  any integer $Z_0$ can be represented as 
   \[
     Z_0= r_0 + Z
	 \]
		with  $ Z\equiv s_0 \pmod 2, \; 0\le r_0 < 2$, thus if $Z_0$ is large then
   \[
     Z_0= \sum_{i=1}^{ks_0} a(n_i) - \sum_{j=1}^{ks_0} a(n_j ) + 
     \underbrace{a(1)+\cdots + a(1)}_{r_0-\text{times}},
   \] 
	   since $a(1)=1$.
     Recall that $n_f$ satisfies $a(n_f)<0$. Let $C_0:=-a(n_f)$. We have
   \[
      C_{0}Z_0 = C_0\sum_{i=1}^{ks_0} a(n_i) + \sum_{j=1}^{ks_0} a(n_jn_f) + C_0r_0 a(1)
	 \] 
     with $0\le r_0 < K$.
     As above, we note that for $Z_1$ large enough there exist integers $Z_0$ and $0\le r_1 < C_0$
     such that
   \begin{equation}\label{eq:Nlarge}
     Z_1 = C_0Z_0 + r_1 = C_0\sum_{i=1}^{ks_0} a(n_i) + \sum_{j=1}^{ks_0} a(n_jn_f) + C_0r_0 a(1) +
         r_1 a(1).
   \end{equation}
     Therefore, any integer $Z$ with $ |Z|\ge T$ can be expressed as in ~\eqref{eq:Nlarge} with
     $r_0 \le 2,$  $r_1\le C_0.$ The number of summands $a(n) $ in~\eqref{eq:Nlarge} is 
   \[
      (C_0+1)ks_0 + C_0r_0 + r_1 . 
   \] 
     
     For integers $Z$ with  $ |Z|\le T$  let $n'$ be such that $2Z < a_{n'}$. It holds that $|Z-a(n')|> T,$
     so $Z-a(n')$ can be written in the form ~\eqref{eq:Nlarge}. Hence any integer $Z$ can be written in the form  
   \[
     Z=\sum_{j=1}^{\ell}a(n_j ),
   \]
     with $$\ell \le (C_0+1)ks_0 + C_0r_0 + r_1+1\le (C_0+1)ks_0 + 2C_0 +C_0 +1=$$
		$$=(1-a(n_f))ks_0 -3a(n_f) +1=-a(n_f)(ks_0+3)+ks_0 +1,$$
		 since  $r_0 \le 2$ and  $r_1\le C_0$.  The theorem is proved with 
		$$ C(f):=-a(n_f)(ks_0+3)+ks_0 +1.$$
	Since $s_0$ depends only on $k$ we have
	$$C(f)\ll_k|a(n_f)|. $$
	By the Ramanujan-Petersson-Deligne estimate we have
	$$|a(n_f)|\le d(n_f)n_f^{\frac{2k-1}{2}},$$
	where $d(\cdot)$ is the number of divisors function which satisfies 
	$$d(n)\ll_\varepsilon n^\varepsilon,$$
	so
	$$a(n_f)\ll _\varepsilon n_f^{\frac{2k-1}{2}+\varepsilon}.$$
  By Lemma~\ref{lemma:firstnegative} we have
	$$n_f\ll (4k^2N)^{\frac38}, $$
	hence 
	$$C(f)\ll_{\varepsilon,k}N^{\frac{6k-3}{16}+\varepsilon}. $$
		$\Box$

\newpage


\begin{thebibliography}{999}

\bibitem{Deligne} P.~Deligne,
        {\it La conjecture de Weil. I.} (French)
         Inst. Hautes \'Etudes Sci. Publ. Math. (1974), No.~43 , 273--307. 

\bibitem{GaraevGarciaKonyagin} M.~Garaev, V.~Garc\'{i}a and S.~Konyagin,
        {\it Waring's problem with the Ramanujan $\tau-$function,}
        Russian Acad. Sci. Izv. Math. {\bf 72} (1) (2008), 45--46.

\bibitem{Hua} L.~K.~Hua, 
        {\it Additive theory of prime numbers,} Translations of Mathematical Monographs, 
        Vol. 13 American Mathematical Society, Providence,  1965.

\bibitem{IKS} H. Iwaniec, W. Kohnen, J. Sengupta, 
        {\it The first negative Hecke eigenvalue,}
        Int. J. Number Theory  {\bf 3} (2007), 355--363.

\bibitem{KS}  W. Kohnen, J. Sengupta, 
        {\it On the first sign change of Hecke eigenvalues of newforms,}
        Math. Z.  {\bf 254}(1) (2006), 173--184.

\bibitem{KLSW}  E. Kowalski, Y.-K. Lau, K. Soundararajan, J. Wu, 
        {\it On modular signs,}
       Math. Proc. Camb. Phil. Soc. {\bf 149} (3) (2010), 389--411.  

\bibitem  {KW} A. V. Kumchev, T. D. Wooley,  
            {\it On the Waring-Goldbach problem for eight and higher powers,}
        J. Lond. Math. Soc. (2) {\bf 93} (2016), no. 3, 811--824.

\bibitem  {Mato} K.~Matom\"aki, 
            {\it On signs of Fourier coefficients of cusp forms,}
        Math. Proc. Camb. Phil. Soc. {\bf 152} (2012), 207--222.
				 
\bibitem{Murty} M. Ram~Murty, 
        {\it Oscillations of Fourier coefficients of modular forms,}
        Math. Ann. {\bf 262} (1983), No.~4, 431--446.

\bibitem{MurtyMurty} M. Ram~Murty, V. Kumar~Murty, 
        {\it Non-vanishing of L-Functions and Applications,}
        Progress in mathematics, Vol. 157, Birkh\"auser, 1997.    

 
       
\bibitem{Snurnitsyn} P.~Snurnitsyn,
        {\it On basic properties of the Ramanujan $\tau-$function}
        Math. Notes, {\bf 90} (2011), No.~5, 736--743.         
 
\end{thebibliography}
\end{document}